\newif\ifpdf
\numberwithin{equation}{section}
\newcommand{\R}{\mathbb{R}}
\renewcommand\P{{\mathcal P}}
\newcommand\Q{{\mathcal Q}}
\renewcommand\S{{\mathcal S}}
\newcommand\x{\times}
\newtheorem{thm}{Theorem}[section]
\newtheorem{defn}[thm]{Definition} 
\newtheorem*{defnx}{Definition 2.1$'$}
\def\whsq{\vbox to 5.8pt 
{\offinterlineskip\hrule 
\hbox to 5.8pt{\vrule height 
5.1pt\hss\vrule height 5.1pt}\hrule}}
\def\<{\langle} 
\def\>{\rangle} 
\def\PP{{\mathop{{\rm I}\kern-.2em{\rm P}}\nolimits}} 
\def\FF{{\mathop{{\rm I}\kern-.2em{\rm F}}\nolimits}}   
\def\ZZ{{\mathop{{\rm I}\kern-.2em{\rm Z}}\nolimits}}
\DeclareMathOperator{\spn}{\operatorname{span}}
\newcommand\range{\mathbb N_n}
\newlength{\sidemargin} 
\begin{document}
\title[]{
The serendipity family of finite elements
}
\author{Douglas N. Arnold and Gerard Awanou}

\address{Department of Mathematics, University of Minnesota, Minneapolis,
Minnesota 55455}

\email{arnold@umn.edu}
\urladdr{http://www.ima.umn.edu/\~{}arnold}

\address{Northern Illinois University,
Department of Mathematical Sciences,
Dekalb, IL, 60115}

\email{awanou@math.niu.edu}  

\urladdr{http://www.math.niu.edu/\~{}awanou}

\thanks{The work of the first author was supported in part by NSF grant
DMS-0713568 and the work of the second author was supported in part by
NSF grant DMS-0811052 and the Sloan Foundation.}
\thanks{
To appear in Foundations of Computational Mathematics, 2011.}
\subjclass[2000]{Primary: 65N30}
\keywords{serendipity, finite element, unisolvence}

\begin{abstract}
We give a new, simple, dimension-independent definition of the serendipity finite element family.
The shape functions are the
span of all monomials which are linear in at least $s-r$ of the
variables where $s$ is the degree of the monomial or, equivalently, whose superlinear
degree (total degree with respect to variables entering at least quadratically) is at most $r$.  The degrees of freedom are given
by moments of degree at most $r-2d$ on each face of dimension $d$.  We establish unisolvence
and a geometric decomposition of the space.
\end{abstract}

\maketitle
\section{Introduction}
The serendipity family of finite element spaces are among the most popular finite element spaces
for parallelepiped meshes in two, and, to a lesser extent,
three dimensions.  For each such mesh and
each degree $r\ge 1$ they provide a finite element subspace with $C^0$ continuity which has
significantly smaller dimension than the more obvious alternative, the tensor product
Lagrange element family.
However, the serendipity elements
are rarely studied systematically, particularly in 3-D.  Usually only
the lowest degree examples are discussed, with the pattern for higher degrees not evident.
In this paper, we give a simple, but apparently new, definition of the serendipity
elements, by specifying in a dimension-independent fashion the space of shape functions
and a unisolvent set of degrees of freedom.

The serendipity finite element space $\S_r$ may be viewed as a reduction of
the space $\Q_r$, the
tensor product Lagrange finite element space of degree $r\ge1$.
The $\Q_r$ elements are certainly the simplest, and for many purposes the best, 
$C^0$ finite elements on parallelepipeds.
They may be defined by specifying a space of polynomial shape functions,
$\Q_r(I^n)$, and a unisolvent set of degrees of freedom, for the unit cube $I^n$.
Here $I=[-1,1]$ is the
unit interval, $n\ge1$ is the space dimension, and
for $f\subset \R^n$ and $r\ge0$ the space $\Q_r(f)$ is defined as the restriction to $f$ of
the functions on $\R^n$ which are polynomial of degree at most $r$ in
each of the $n$ variables separately, so $\dim \Q_r(I^n)=(r+1)^n$.
In addition to the usual evaluation degrees of freedom associated  to each vertex, to each
face $f$ of $I^n$ of some dimension $d\ge 1$ are associated the degrees of freedom
\begin{equation}\label{dofq}
u \mapsto \int_f uq,\quad q\in\Q_{r-2}(f),
\end{equation}
(properly speaking, $q$ ranges over a basis of $\Q_{r-2}(f)$, but we
shall henceforth dispense with this distinction).
Since the number of faces of dimension
$d$ of $I^n$ is $2^{n-d}\binom{n}{d}$, and, by the binomial expansion,
$$
\sum_{d=0}^n 2^{n-d}\binom{n}{d}(r-1)^d = (r+1)^n,
$$
we see that the total number of degrees of freedom coincides with the dimension of $\Q_r(I^n)$.
The proof of unisolvence is straightforward, by using the degrees of freedom to show, in turn,
that $u$ vanishes on the faces of dimension $0,1,\ldots$.


The serendipity family in two dimensions is discussed in most finite element textbooks, especially
the lowest order cases, namely
the 4-node, 8-node, and 12-node rectangular elements.  The idea is to maintain the
same degrees of freedom as for $\Q_r$ on the boundary of the element, but to remove
interior degrees of freedom, and specify a correspondingly smaller shape function space
for which the smaller set of degrees of freedom is unisolvent.  In this way,
one obtains a space of lower dimension without sacrificing $C^0$ continuity and,
hopefully, without much loss of accuracy.
For $r=1$,
there are no interior degrees of freedom for $\Q_r$, so the space $\S_1$ is identical
to $\Q_1$. But the serendipity space $\S_2$ and $\S_3$ have only $8$ and $12$
degrees of freedom, respectively, compared to $9$ and $16$, respectively, for
$\Q_2$ and $\Q_3$.  A possible generalization to higher degree is to keep
only the $4r$ boundary degrees of freedom of $\Q_r$ and to seek $\S_r$ as a
subspace of $\Q_r$ of dimension $4r$ for which these degrees of freedom are
unisolvent.  This is easily accomplished by taking $\S_r$ to be the span
of the monomials $x^i$, $x^iy$, $y^i$, and $xy^i$, for $0\le i\le r+1$, ($4r$
monomials altogether, after accounting for duplicates).  The resulting finite element
space is referred to as the serendipity space in some of the literature,
e.g., \cite{MR1008473}.  However,
for $r>3$ this space does not contain the complete polynomial space $\P_r$, and
so does not achieve the same degree of approximation as $\Q_r$.  Therefore
the shape functions for the serendipity space in two dimensions is usually taken
to be the span of $\P_r(I^n)$ together with the above monomials, or equivalently,
$$
\S_r(I^n) = \P_r(I^n)+\spn[x^ry,xy^r].
$$
The degrees of freedom associated to the vertices and other faces of positive codimension
are taken to
be the same as for $\Q_r$, and the degrees of freedom in the interior of the element
can be taken as the moments $u\mapsto\int_{I^2} uq$, $q\in\P_{r-4}(I^2)$, resulting
in a unisolvent set.

This definition does not generalize in an obvious fashion to three (or more)
dimensions, but many texts discuss the
lowest order cases of serendipity elements in three dimensions: the 20-node brick, and possibly
the 32-node brick \cite{MR1008473, Kaliakin, MR0443377,MR1897985},
which have the same degrees of freedom as $\Q_r(I^3)$ on the boundary, $r=2,3$, but none in
the interior.  It is often remarked that the choice of shape function space is not
obvious, thus motivating the name ``serendipity.''
The pattern to extend these low degree cases to higher degree brick elements is not
evident and usually not discussed.  A notable exception is the text of
Szab\'o and Babu\v{s}ka \cite{MR1164869}, which defines the space of
serendipity polynomials on the three-dimensional
cube for all polynomial degrees, although without using the term serendipity and by an
approach quite different from that given here.  High-degree serendipity elements
on bricks have been used in the $p$-version of the finite element method \cite{MR1067945}.

In this paper we give a simple self-contained dimension-independent
definition of the serendipity family.  For general $n,r\ge 1$,
we define the polynomial space $\S_r(I^n)$ and the degrees of freedom associated to each
face of the $n$-cube and prove unisolvence.

\section{Shape functions and degrees of freedom}
\subsection{Shape functions}
We now give, for general dimension $n\ge1$ and general degree $r\ge 1$, a concise definition
of the space $\S_r(I^n)$ of shape functions for the serendipity finite element. 
(As usual,
a monomial is said to be linear in some variable $x_i$ if it is divisible by $x_i$
but not $x_i^2$, and it is said to be superlinear if it is divisible by $x_i^2$.)
\begin{defn}
The serendipity space $\S_r(I^n)$ is the
span of all monomials in $n$ variables which are linear in at least $s-r$ of the
variables where $s$ is the degree of the monomial.
\end{defn}
\noindent
We may express this definition in an alternative form using the notion of the
\emph{superlinear degree}.
Define the superlinear degree of a monomial $p$, denoted $\deg_2 p$, to
be the total degree of $p$ with respect to variables which enter it superlinearly
(so, for example, $\deg_2 x^2 y z^3 = 5$) and define the superlinear
degree of a general polynomial
as the maximum of the superlinear degree of its monomials.
\begin{defnx}
The serendipity space $\S_r(I^n)$ is the
space of all polynomials in $n$ variables with superlinear degree at most $r$.
\end{defnx}
\noindent
It is easy to see that Definitions~2.1 and 2.1$'$ are equivalent, since
for a monomial $p$ which is linear in $l$ variables, $\deg p = \deg_2 p + l$.
Surprisingly, neither form of the definition seems to appear in the literature.

Since any monomial is, trivially, linear in at least $0$ variables, and no monomial in
$n$ variables is linear in more than $n$ variables, we have immediately from Definition~2.1
that $\P_r(I^n)\subset\S_r(I^n)\subset\P_{r+n}(I^n)$, where $\P_r(I^n)$ is the
space of polynomial functions of degree at most $r$ on $I^n$.  In fact, $\S_r(I^n)\subset\P_{r+n-1}(I^n)$, since the only monomial which is linear in all $n$ variables
is $x_1x_2\cdots x_n$, which is of degree $n$, not degree $r+n$.
In particular, the one-dimensional case is trivial: $\S_r(I)=\P_r(I)=\Q_r(I)$, for all $r\ge 1$.
The two-dimensional case is simple as well:
$\S_r(I^2)$ is spanned by $\P_r(I^2)$ and the two monomials $x_1x_2^r$ and $x_1^rx_2$ (these
two coincide when $r=1$).  Thus we recover the usual serendipity shape functions
in two dimensions.
In three dimensions, $\S_r(I^3)$ is obtained by adding to $\P_r(I^3)$ the span
of certain monomials of degrees $r+1$ and $r+2$, namely those of degree $r+1$ which are
linear in at least one of the $n$ variables, and those of degree $r+2$ which are linear
in at least two of them (there are three of these---$x_1x_2x_3^{r}$, $x_1x_2^{r}x_3$,
$x_1^{r}x_2x_3$---except for $r=1$, when all three coincide).

To calculate $\dim\S_r(I^n)$ we count the monomials in $n$ variables
with superlinear degree
at most $r$.  For any monomial $p$ in $x_1,\ldots,x_n$, 
let $J\subset\range:=\{1,\ldots,n\}$ be the set of indices for which $x_i$ enters $p$ superlinearly
and let $d\ge0$ be the
cardinality of $J$.  Then 
$$
p= \bigl(\prod_{j\in J} x_j^2\bigr)\x q \x \bigl(\prod_{i\in J^c} x_i^{a_i}\bigr),
$$
where $q$ is a monomial in the $d$ variables indexed by $J$ and each
$a_i$, $i\in J^c$ (the complement of $J$), equals either $0$ or $1$.  Note that $\deg_2p = 2d+\deg q$.
Thus we may uniquely specify a monomial in $n$ variables with superlinear degree
at most $r$
by choosing $d\ge 0$, choosing a set $J$ consisting of $d$ of the $n$ variables
(for which there are $\binom{n}{d}$ possibilities), choosing a monomial
of degree at most $r-2d$ in the $d$ variables ($\binom{r-d}{d}$
possibilities), and choosing the exponent to be
either $0$ or $1$ for the $n-d$ remaining indices
($2^{n-d}$ possibilities).  Thus
\begin{equation}\label{dimen}
\dim\S_r(I^n) = \sum_{d=0}^{\min(n,\lfloor r/2\rfloor)} 2^{n-d}\binom{n}{d}
\binom{r-d}{d}.
\end{equation}
Table~1 shows the dimension for small values of $n$ and $r$.
\begin{table}[t]
 \begin{tabular}{c|cccccccc}
 \multicolumn{4}{c}{} & \multicolumn{2}{c}{$r$}\\
 $n$ & 1 & 2 & 3 & 4 & 5 & 6 & 7 & 8\\
\hline
1 & 2 & 3& 4 & 5 & 6 & 7 & 8 & 9\\
2 & 4 & 8 & 12 & 17 & 23 & 30 & 38 & 47\\
3 & 8 & 20 & 32 & 50 & 74 & 105 & 144 & 192\\
4 & 16 & 48 & 80 & 136 & 216 & 328 & 480 & 681\\
5 & 32 & 112 & 192 & 352 & 592 & 952 & 1472 & 2202
 \end{tabular}
\bigskip
\caption{$\dim S_r(I^n)$}
\end{table}

\subsection{Degrees of freedom}
We complete the definition of the serendipity finite elements by specifying
a set of degrees of freedom and proving that they are unisolvent.  Let $f$ be
a face of $I^n$ of dimension $d\ge 0$.  Then the degrees of freedom associated
to $f$ are given by
\begin{equation}\label{serdof}
u \mapsto \int_f uq, \quad q\in \P_{r-2d}(f).
\end{equation}
Note that $\P_s(f)$ is defined to be the space of restrictions
to $f$ of $\P_s(I^n)$, so if $f$ is a vertex, then $\P_s(f)=\R$ for all $s\ge 0$.
In this case the integral is with respect to the counting measure,
so each vertex is assigned the evaluation degree of freedom.

In contrast to the degrees of freedom \eqref{dofq} of the tensor product
Lagrange family, for the serendipity family, the degrees of freedom on faces are
given in terms of the $\P_s(f)$ rather than $\Q_s(f)$.  For example, for $\S_6(I^2)$, there
are $6=\dim\P_{6-2\times 2}(I^2)$ degrees of freedom internal to
the square.  For $\S_6(I^3)$,
there are $6$ degrees of freedom on each face of the cube and one degree of freedom
internal to the cube.
Figure~\ref{fg:seren} shows degree of freedom diagrams for the spaces
$\S_r(I^n)$ for $r\le4$ and $n\le3$.
\begin{figure}[tb]
 \includegraphics[width=6in]{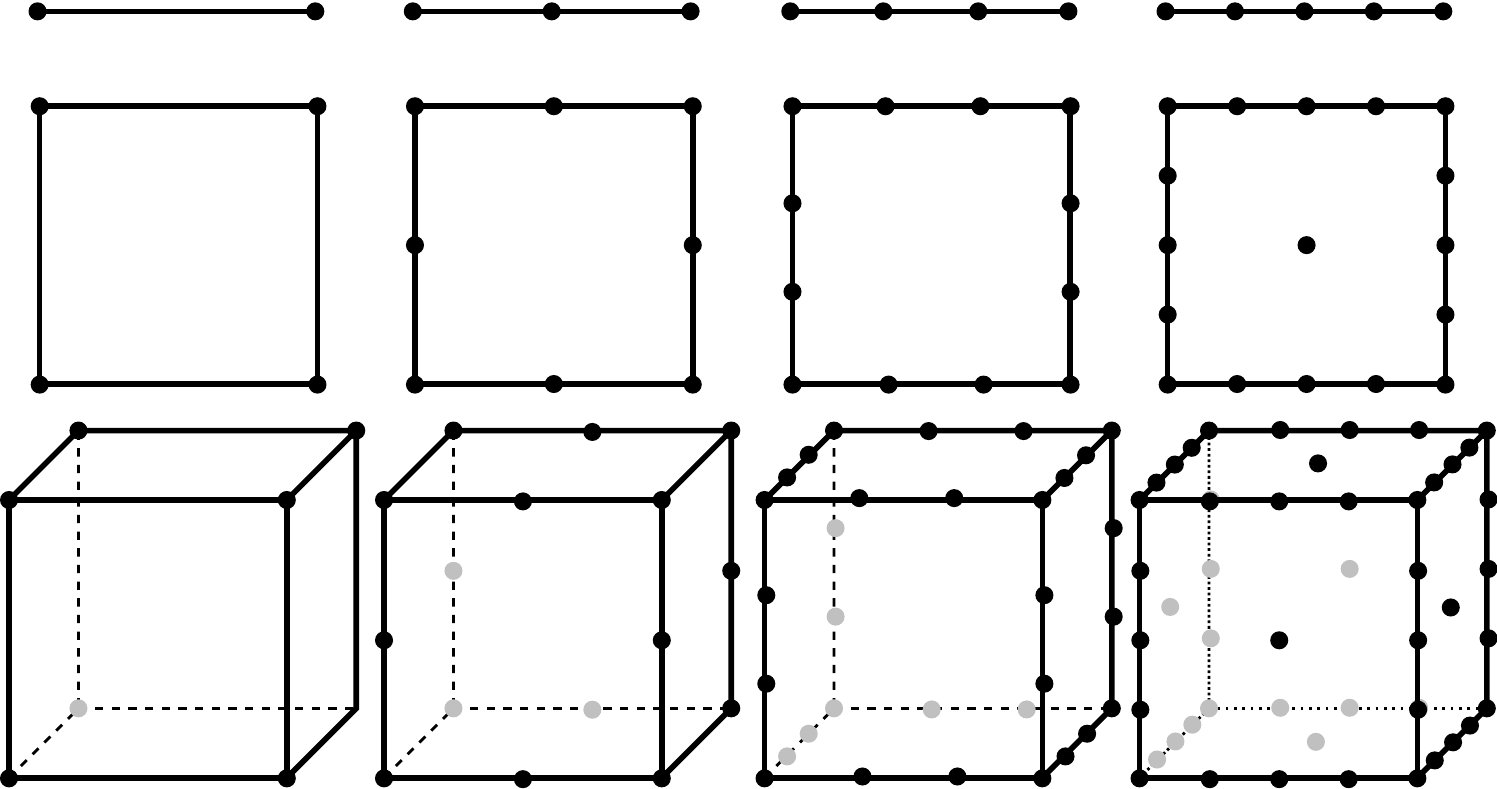}
\caption[]{Degrees of freedom for $\S_r(I^n)$ for $r=1,2,3,4$, $n=1,2,3$.
Dots indicate the number of degrees of freedom associated to each
face.}\label{fg:seren}
\end{figure}

\begin{thm}[Unisolvence] The degrees of freedom specified in \eqref{serdof} are unisolvent
 for $\S_r(I^n)$.
\end{thm}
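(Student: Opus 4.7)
The plan is to establish unisolvence by a dimension count combined with an induction on face dimension. Summing the cardinalities $2^{n-d}\binom{n}{d}\dim\P_{r-2d}(\R^d)=2^{n-d}\binom{n}{d}\binom{r-d}{d}$ of the degrees of freedom \eqref{serdof} over $d$ reproduces the formula \eqref{dimen} for $\dim\S_r(I^n)$. Hence it will suffice to show that any $u\in\S_r(I^n)$ on which every degree of freedom vanishes must be identically zero.

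Two preliminary facts enable a face-by-face analysis. First, the restriction of $\S_r(I^n)$ to a face $f$ of dimension $d$ lies in $\S_r(f)$ (identified with $\S_r(I^d)$): restricting to $f$ fixes some coordinates at $\pm 1$, which drops variables from each monomial and so never increases its superlinear degree. Second, each degree of freedom attached to $f$ depends only on $u|_f$.

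I would then induct on $d=0,1,\ldots,n$, showing at each stage that $u|_f\equiv 0$ for every face $f$ of dimension $d$. The base case $d=0$ is immediate, since vertex degrees of freedom are evaluations. For the inductive step, fix such an $f$ with free coordinates $x_{i_1},\ldots,x_{i_d}$; by the inductive hypothesis $u|_f$ vanishes on $\partial f$, hence is divisible by the cubical bubble $b_f:=\prod_{k=1}^{d}(1-x_{i_k}^2)$, say $u|_f=b_f v$ for the unique quotient polynomial $v$. Testing \eqref{serdof} against the choice $q=v$ would give $\int_f b_f v^2=0$ and hence $v\equiv 0$, provided one knows that $v\in\P_{r-2d}(f)$.

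The heart of the argument, and the step I expect to be the main obstacle, is therefore the lemma: \emph{if $w\in\S_r(I^d)$ is divisible by $\prod_{i=1}^d(1-x_i^2)$, so that $w=\bigl(\prod_i(1-x_i^2)\bigr)v$ for some polynomial $v$, then $v\in\P_{r-2d}(I^d)$.} I would prove this by a leading-monomial argument built on a no-cancellation observation. Let $m=\prod x_i^{b_i}$ be any monomial appearing in $v$ of maximal total degree $s=\sum b_i$, and consider the monomial $M=\prod x_i^{b_i+2}$. In the expansion of $w$, another monomial $m'=\prod x_i^{b_i'}$ of $v$ can contribute to $M$ only if $b_i'\in\{b_i,b_i+2\}$ for every $i$, which forces $\sum b_i'\ge s$ with equality only when $m'=m$. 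Maximality of $s$ thus leaves $m$ as the sole source of $M$, so $M$ appears in $w$ with nonzero coefficient. Since every exponent of $M$ is at least $2$, its superlinear degree equals $s+2d$, and the hypothesis $w\in\S_r(I^d)$ forces $s+2d\le r$, i.e., $\deg v\le r-2d$. Without this no-cancellation observation, divisibility by the bubble alone would not pin down $\deg v$.
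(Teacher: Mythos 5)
Your proof is correct and follows essentially the same route as the paper: the dimension count, an induction that kills $u$ on faces of increasing dimension, factorization by the cubical bubble, the superlinear-degree bound on the quotient, and the positivity of the bubble against the interior moments. Your no-cancellation lemma is exactly the justification of the identity $\deg_2 u = 2n + \deg p$ that the paper asserts in one line, and the paper organizes the induction on the ambient dimension $n$ rather than on the face dimension $d$, but unrolled these are the same argument.
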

\begin{proof}
 First we note that number of degrees of freedom equals the dimension
of the space.
Indeed, there are $2^{n-d}\binom{n}{d}$ faces of the cube of dimension $d$ (obtained
by fixing $n-d$ of the variables to $\pm 1$) and, for $f$ of dimension
$d$, $\dim\P_{r-2d}(f)=\binom{r-d}{d}$, so the total number of degrees of
freedom proposed is precisely $\dim\S_r(I^n)$ given by \eqref{dimen}. 
It remains to show that if $u\in\S_r(I^n)$ and all the
quantities in \eqref{serdof} vanish, then $u$ vanishes.  We do this by induction
on $n$, the case $n=1$ being trivial.  Let $F$ be a face of $I^n$ of dimension $n-1$
and let $U=u|_F$.  The $U$ is a polynomial in $n-1$ variables with superlinear degree
at most $r$, i.e., $U\in\S_r(F)$.  Moreover, if $f$ is any face of $F$ of dimension
$d$, and $q\in\P_{r-2d}(f)$, then $\int_f Uq = \int_f uq=0$.  Therefore, by the
inductive hypothesis, $U$ vanishes.  In this way, we see that $u$ vanishes on all its
faces, i.e., whenever we fix some $x_i$ to $\pm 1$.  Therefore
$$
u=(1-x_1^2)\cdots(1-x_n^2)\, p
$$
for some polynomial $p$.  Note that $\deg_2 u = 2n + \deg p$.  Therefore $\deg p \le r-2n$,
and we make take $f=I^n$ and $q=p$ in \eqref{serdof} to find
$$
\int_{I^n}(1-x_1^2)\cdots(1-x_n^2) \, p^2 = 0.
$$
It follows that $p=0$.
\end{proof}

In the proof of unisolvence we established that the degrees of freedom associated to
a face and its subfaces determine the restriction of $u$ to the face.  This is important,
since it implies that
an assembled serendipity finite element function is continuous.

\section{Geometric decomposition}
In this section we give a geometric decomposition of $\S_r(I^n)$,
by which we mean a direct sum decomposition into subspaces associated to
the faces.  Such a decomposition can be used to derive explicit
local bases which are useful for the efficient implementation
of the elements, and also for insight.

First we introduce some notation.
Let $\Delta_d(I^n)$ denote the set of faces of the $n$-cube of dimension $d$
and $\Delta(I^n)=\bigcup_{d=0}^n\Delta_d(I^n)$ the set of all faces of all dimensions.
A face $f\in\Delta_d(I^n)$ is determined by the equations $x_j=c_j$ for $j\in J$
where $J \subset \range$ is a set of cardinality $n-d$ and each $c_j\in\{-1,1\}$.
We define the bubble function for $f$ as
$$
b_f = \prod_{j \in J^c} (1-x_j^2)\prod_{j \in J} (1+c_jx_j),
$$
the unique (up to constant multiple) nontrivial
polynomial of lowest degree vanishing on the $(n-1)$-dimensional faces of $I^n$ which
do not contain $f$.  Note that $b_f$ is strictly positive on
the relative interior of $f$ and vanishes on all faces of $I^n$ which do not contain $f$.
For $s\ge0$ we denote by $\P_s^f(I^n)$ the space of
polynomials of degree at most $s$ in the $d$ variables $x_j$, $j\in J^c$.  If $d=0$, then
$\P_s^f(I^n)$ is understood to be $\R$.

To a face $f$ of dimension $d$ we associate the space of polynomials
$V_f := \P_{r-2d}^f(I^n)b_f$. By definition,
any element $q\in V_f$ has the form $q=p\prod_{j \in J} (x_j+c_j)$ where $p\in\P_r^f(I^n)$,
and so $\deg_2 q \le r$.  Thus $V_f\subset\S_r(I^n)$.
The following theorem states that they do indeed form a geometric decomposition.

\begin{thm}[Geometric decomposition of the serendipity space]
Let $\S_r(I^n)$ denote the serendipity space of degree $r$, and for each $f\in\Delta_d(I^n)$,
let $V_f= \P_{r-2d}^f(I^n)b_f$.  Then
\begin{equation} \label{geo1}
\S_r(I^n) = \sum_{f\in\Delta(I^n)}V_f.
\end{equation}
Moreover, the sum is direct.
\end{thm}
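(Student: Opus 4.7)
The plan is to prove directness of the sum first, and then upgrade it to \eqref{geo1} via a dimension count. For $f\in\Delta_d(I^n)$, the space $\P_{r-2d}^f(I^n)$ consists of polynomials of degree at most $r-2d$ in the $d$ free coordinates of $f$, so $\dim V_f=\binom{r-d}{d}$; summing $2^{n-d}\binom{n}{d}\binom{r-d}{d}$ over $d$ reproduces the expression \eqref{dimen} for $\dim\S_r(I^n)$. Since $V_f\subset\S_r(I^n)$ has already been noted, once the sum is direct this dimension equality forces \eqref{geo1}.

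For directness I would induct on face dimension. Suppose $\sum_{f\in\Delta(I^n)}v_f=0$ with $v_f\in V_f$ but some $v_g$ nonzero; choose such a $g$ of minimum dimension $d$. Restrict the identity to $g$. Every term with $f\not\subset g$ contributes zero because $b_f$, and hence $v_f$, vanishes on every face that does not contain $f$, as noted when $b_f$ was introduced. Every term with $f\subsetneq g$ contributes zero by the minimality of $d$. This leaves $v_g|_g=0$.

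Write $v_g=p\,b_g$ with $p\in\P_{r-2d}^g(I^n)$. Since $b_g$ is strictly positive on the relative interior of $g$, the identity $v_g|_g=0$ forces $p|_g\equiv 0$ there, and then $p=0$ as a polynomial because $p$ depends only on the $d$ coordinates $\{x_j:j\in J_g^c\}$ that parameterize $g$. Hence $v_g=0$, contradicting the choice of $g$, so the sum is direct.

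The main obstacle is this last step, extracting $p=0$ from $v_g|_g=0$. It crucially uses the way $V_f$ was defined, with the polynomial factor restricted to depend only on the free coordinates of $f$, so that restriction from $I^n$ to $f$ is injective on $V_f$; if one had instead used $\P_{r-2d}(I^n)\,b_f$ the argument would collapse. Everything else is routine once the two geometric facts about $b_f$ (vanishing on non-containing faces, strict positivity on the interior of $f$) are in hand.
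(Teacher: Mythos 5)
Your proof is correct, but it runs in the opposite direction from the paper's. Both arguments hinge on the same dimension count $\sum_{f}\dim V_f=\sum_d 2^{n-d}\binom{n}{d}\binom{r-d}{d}=\dim\S_r(I^n)$ from \eqref{dimen}, together with the already-established inclusion $V_f\subset\S_r(I^n)$; given that, it suffices to prove \emph{either} that the sum spans \emph{or} that it is direct. The paper proves spanning constructively: each monomial $x_1^{\alpha_1}\cdots x_n^{\alpha_n}$ of superlinear degree at most $r$ is expanded via the one-variable identities $x^{\alpha}=\frac12(1+x)+\frac{(-1)^{\alpha}}{2}(1-x)+(1-x^2)r(x)$ into a sum of terms, each of which is exhibited as an element of some $V_f$; directness then falls out of the dimension count. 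You instead prove directness by a minimal-counterexample (equivalently, induction on face dimension) argument: restricting a vanishing sum to a minimal face $g$ with nonzero component kills the terms from faces not containing $g$ (since $b_f$ vanishes on faces not containing $f$) and the terms from proper subfaces of $g$ (by minimality), leaving $v_g|_g=0$, whence $p=0$ by positivity of $b_g$ on the relative interior of $g$ and the fact that $p$ depends only on the coordinates parameterizing $g$; spanning then falls out of the dimension count. Your route is the shorter and more conceptual one, parallels the unisolvence proof, and makes transparent why the definition uses $\P^f_{r-2d}$ rather than $\P_{r-2d}(I^n)$; the paper's route has the practical advantage of producing an explicit algorithm for decomposing a given polynomial into its face components, which is what one wants when deriving local bases for implementation. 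Both are complete proofs.
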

\begin{proof}
Clearly $\dim V_f=\dim P_{r-2d}^f(I^n)=\binom{r-d}{d}$, and so \eqref{dimen} implies that
$$
\dim\S_r(I^n) = \sum_{f\in\Delta(I^n)}\dim V_f.
$$
Hence, it is sufficient to prove that if $p$ is a monomial with superlinear degree $\le r$,
then $p\in\sum_f V_f$.
Write $p=x_1^{\alpha_1}\cdots x_n^{\alpha_n}$ and let
\begin{equation*}
 J_0=\{\,i\in\range\,|\, \alpha_i=0\,\},\quad
 J_1=\{\,i\in\range\,|\, \alpha_i=1\,\},\quad
 J_2 = (J_0\cup J_1)^c,
\end{equation*}
denote the sets indexing the variables in which $p$ is constant, linear, and superlinear, respectively.
By assumption, $\sum_{j\in J_2}\alpha_j \le r$.
Now we expand
\begin{equation*}
x_j^{\alpha_j} = 
 \begin{cases}
 \frac12(1+x_j)+\frac12(1-x_j), & j\in J_0, \\
\frac12(1+x_j)-\frac12(1-x_j), & j\in J_1, \\
\frac12(1+x_j) + \frac{(-1)^{\alpha_j}}2(1-x_j) + (1-x_j^2)r_j(x_j), & j\in J_2,  
 \end{cases}
\end{equation*}
where $\deg r_j = \alpha_j-2$, and insert these into $p=x_1^{\alpha_1}\cdots x_n^{\alpha_n}$.
We find that $p$
is a linear combination of terms of the form
\begin{equation}\label{defq}
 q = \prod_{j\in J'} r_j(x_j)(1-x_j^2) \prod_{j\in J''\cup J_0 \cup J_1}(1\pm x_j),
\end{equation}
where $J'$ and $J''$ partition $J_2$.  Let $f$ be the face given by $1\mp x_j=0$, $j\in J''\cup J_0\cup J_1$,
where the signs are chosen opposite to those on \eqref{defq}.  Then $\dim f= \#J'$ and
$q = rb_f$ with $r\in \P_s^f(I^n)$, $s=\sum_{j\in J'}(\alpha_j-2)\le r-2\dim f$.  This shows
that $q\in V_f$ and $p \in \sum_f V_f$, as desired.
\end{proof}

\bibliographystyle{amsplain}
\bibliography{serendipity}

\end{document}